\numberwithin{equation}{section}
\newtheorem{theorem}{Theorem}[section]
\newtheorem{lemma}{Lemma}[section]
\newcommand{\ve}{\varepsilon}
\theoremstyle{definition}
\newcommand{\Sym}{\textrm{Sym}}
\theoremstyle{remark}
\begin{document}

\title[Constant Rank Theorem]{On a constant rank theorem \\ for nonlinear elliptic PDEs}

\author[G. Sz\'ekelyhidi]{G\'abor Sz\'ekelyhidi}
\address{Department of Mathematics, University of Notre Dame, 255 Hurley, Notre Dame, IN 46556}
\author[B. Weinkove]{Ben Weinkove}
\address{Department of Mathematics, Northwestern University, 2033 Sheridan Road, Evanston, IL 60208}
\thanks{Supported in part by National Science Foundation grants DMS-1306298 and DMS-1350696 (G.Sz.) and DMS-1406164 (B.W.). Part of this work was carried out while the first-named author was visiting the Mathematics Department of Northwestern University and he thanks them for their hospitality.}

\begin{abstract}
We give a new proof of Bian-Guan's constant rank theorem  for nonlinear elliptic equations.  Our approach is to use  
 a linear expression of the eigenvalues of the Hessian instead of quotients of elementary symmetric functions.
\end{abstract}

\maketitle

\section{Introduction}

A constant rank theorem  asserts that a convex solution $u$ of an
elliptic partial differential equation, satisfying appropriate
conditions,  must have constant rank.  In the 1980's
Caffarelli-Friedman \cite{CF} proved such a result for semi-linear
elliptic equations, and a similar result was discovered around the same
time by Yau (see \cite{SWYY}).  These results were extended to more
general elliptic and parabolic PDEs by Korevaar-Lewis \cite{KL},
Caffarelli-Guan-Ma \cite{CGM} and Bian-Guan \cite{BG, BG2}.  Moreover, the
constant rank theorem (also known as the ``microscopic convexity
principle'') has been shown to hold for a number of geometric
differential equations involving the second fundamental form of
hypersurfaces \cite{GM, GLM, GMZ, CGM, BG}.  In addition, these ideas have  been
investigated in the complex
setting \cite{L, GLZ, HMW,GP}, where there are applications to K\"ahler
geometry. Constant rank theorems are also closely related to the question of convexity of
 solutions of non-linear PDE on convex domains (the ``macroscopic convexity principle'')  \cite{BL, CSp,K, Ke, Ka, ALL,MX,   W}. 

A common approach for establishing a constant rank theorem is to consider expressions involving the elementary symmetric polynomials $\sigma_{\ell}$ of the eigenvalues $\lambda_1 \le \cdots \le \lambda_n$ of the Hessian $D^2u$.  Indeed Bian-Guan \cite{BG} proved a rather general constant rank theorem for nonlinear elliptic equations $$F(D^2u, Du, u, x)=0$$ subject to a local convexity condition for $F$ (see (\ref{convexcondition}) below).  Their proof relies on a sophisticated computation using the quantity $\sigma_{\ell+1} + \frac{\sigma_{\ell+2}}{\sigma_{\ell+1}}$.  In this paper, we 
take a different approach by computing directly with the eigenvalues of $D^2u$ (compare with the works \cite{WY, Sz, STW}, for example).  We consider the simple linear expression
\begin{equation} \label{expression}
\lambda_{\ell} + 2 \lambda_{\ell-1} + \cdots + \ell \lambda_1,
\end{equation}
of the smallest $\ell$ eigenvalues of $D^2u$ (more precisely, we perturb $u$ slightly first).  While this expression is not smooth in general, it has the crucial property that it is \emph{semi-concave}, as long as $u$ is sufficiently regular. 

We now describe our result more precisely.
Let $\Omega$ be a domain in $\mathbb{R}^n$.   Write $\Sym(n)$ for the space of real symmetric $n \times n$ matrices, and $\Sym^+(n)$ for the subset that are strictly positive definite.
We consider the real-valued function $$F=F(A, p, u, x) \in C^{2}( \Sym(n) \times \mathbb{R}^n \times \mathbb{R} \times \Omega)$$ which satisfies the condition that for each $p \in \mathbb{R}^n$,
\begin{equation} \label{convexcondition}
(A, u,x) \in \Sym^+(n) \times \mathbb{R} \times \Omega \mapsto F(A^{-1}, p, u, x) \quad \textrm{is locally convex}.
\end{equation}
Now let $u \in C^{3}(\Omega)$ be a convex solution of
\begin{equation} \label{equation}
F(D^2 u, Du, u, x) =0,
\end{equation}
subject to the ellipticity condition
\begin{equation} \label{ellipticity}
F^{ij}(D^2 u, Du, u, x) >0 \quad \textrm{on } \Omega,
\end{equation}
where we write $F^{ij}$ for the derivative of $F$ with respect to the $(i,j)$th entry $A_{ij}$ of $A$.  Our main result is a new proof of the following theorem of Bian-Guan \cite{BG}.

\begin{theorem} \label{theorem1}
With $u$ and $F$ as above, satisfying (\ref{convexcondition}),  (\ref{equation}) and (\ref{ellipticity}), the Hessian $D^2u$ has constant rank in $\Omega$.
\end{theorem}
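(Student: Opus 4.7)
The standard strategy for a constant rank theorem is to show that the set
$$
\mathcal{S} = \{x \in \Omega : \mathrm{rank}\,D^2u(x) = q\}
$$
is both open and closed in $\Omega$, where $q = \min_\Omega \mathrm{rank}\,D^2u = n - \ell$. Closedness is automatic from lower semi-continuity of rank, so the real content is the openness of $\mathcal{S}$. Fix $x_0 \in \mathcal{S}$; since the $n-\ell$ largest eigenvalues of $D^2u(x_0)$ are strictly positive, they remain so on a small neighborhood $U$ of $x_0$. I would take as the auxiliary function the quantity in (\ref{expression}), rewritten as
$$
\phi(x) = \sum_{k=1}^{\ell} \Lambda_k\bigl(D^2 u(x)\bigr), \qquad \Lambda_k(A) = \lambda_1(A) + \cdots + \lambda_k(A),
$$
which is non-negative by convexity of $u$ and satisfies $\phi(x_0) = 0$. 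The Ky Fan variational formula $\Lambda_k(A) = \min\{\mathrm{tr}(V^T A V) : V \in \mathbb{R}^{n \times k},\ V^T V = I_k\}$ realises each $\Lambda_k$ as a minimum of linear functions of $A$, hence $\Lambda_k$ and $\phi$ are concave in $A$. It follows that $\phi$, as a function of $x$, is semi-concave, which is the property singled out in the introduction.

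The heart of the proof is to establish a differential inequality of the form
$$
F^{ij}(D^2u, Du, u, x)\,\partial_i\partial_j \phi \ \leq\ C\bigl(\phi + |D\phi|\bigr)
$$
in the viscosity sense on $U$, where the operator is uniformly elliptic by (\ref{ellipticity}). At a point $y \in U$ where the bottom $\ell$ eigenvalues of $D^2u(y)$ are simple, $\phi$ is smooth nearby and its derivatives are given by the classical first- and second-variation formulas for simple eigenvalues of a symmetric matrix. Differentiating the equation (\ref{equation}) once and twice, substituting these formulas, and reorganising, one obtains an identity for $F^{ij}\partial_i\partial_j\phi$ whose only potentially bad contributions are the negative "off-diagonal" terms
$$
-\sum_{i \leq \ell < j}\frac{\bigl|F^{pq}\,\partial_k(\partial_p\partial_q u)\,v_i^p v_j^q\bigr|^2}{\lambda_j - \lambda_i}
$$
arising from the second variation of eigenvalues. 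The key algebraic step is to show that these terms are absorbed by the nonnegative quadratic form furnished by the concavity hypothesis (\ref{convexcondition}) on $F(A^{-1}, p, u, x)$: differentiating this concavity statement with respect to $A$ yields, after a short computation, a quadratic form in the third derivatives $\partial_k D^2u$ of exactly the right sign and shape to dominate the off-diagonal contribution.

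To handle points of $U$ at which some of the smallest $\ell$ eigenvalues coincide, one perturbs $u$ by a small quadratic correction $\tfrac{\ve}{2}\sum a_i x_i^2$ with generic constants, runs the smooth computation for the perturbed function, and passes to the limit $\ve \to 0$; the semi-concavity of $\phi$ ensures the inequality persists in the viscosity sense. This is the perturbation alluded to after (\ref{expression}). With the differential inequality in hand, the strong minimum principle for semi-concave viscosity supersolutions of uniformly elliptic operators forces $\phi \equiv 0$ in a neighborhood of $x_0$, hence $\lambda_1 = \cdots = \lambda_\ell = 0$ there, giving $\mathrm{rank}\,D^2u \equiv n-\ell$ near $x_0$ and proving $\mathcal{S}$ is open.

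The principal obstacle is the algebraic identity linking the concavity of $F$ in $A^{-1}$ to the off-diagonal second-variation terms of the eigenvalues: this is precisely where the nonlinearity enters, and it is the step that replaces Bian-Guan's rather delicate computation with $\sigma_{\ell+1} + \sigma_{\ell+2}/\sigma_{\ell+1}$ by a more transparent calculation with the simple, if only semi-concave, linear combination (\ref{expression}). A secondary but nontrivial difficulty is the careful bookkeeping needed at points of eigenvalue coincidence, where the perturbation and viscosity formulation must be deployed with care.
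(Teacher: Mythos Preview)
Your proposal identifies the correct auxiliary quantity (indeed $\phi=\sum_{k=1}^{\ell}\Lambda_k=\sum_{j=1}^{\ell}(\ell+1-j)\lambda_j$ is exactly the paper's $Q^{(\ell)}$) and the correct overall architecture, but two structural ingredients of the actual proof are missing, and without them the argument does not close.

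First, the paper proceeds by a \emph{finite induction on $\ell$}: one proves $Q^{(\ell)}+|DQ^{(\ell)}|\le c_\ve$ assuming the same bound for $Q^{(1)},\ldots,Q^{(\ell-1)}$. This is not a convenience. After the convexity condition has been applied (with the choices $X_{pq}=-P_{pqj}$ for $p,q>\ell$, $Y=P_j$, $Z=e_j$) and the cross terms with $1\le j<m\le\ell$ have absorbed the off-diagonal pieces $|P_{abi}|$, $a\ne b\le\ell$, what remains is
\[
F^{ab}Q^{(\ell)}_{ab}\ \le\ C\sum_{i=1}^n\sum_{a=1}^{\ell}|P_{aai}|+CQ^{(\ell)}+f_\ve .
\]
Now $|DQ^{(\ell)}|$ controls only the single linear combination $\sum_{j\le\ell}(\ell+1-j)P_{jji}$, not the individual $|P_{aai}|$ for $a<\ell$; those are bounded by $c_\ve$ precisely via the inductive hypothesis on $|DQ^{(a)}|$. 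Your outline asserts the clean inequality $F^{ij}\phi_{ij}\le C(\phi+|D\phi|)$ directly, and the description of the ``off-diagonal'' terms (with $F^{pq}$ inside a square) does not match what the second-variation formula actually produces; I do not see how to obtain the inequality without the induction.

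Second, the perturbation $u\mapsto u+\tfrac{\ve}{2}\sum a_i x_i^2$ does not do the two jobs the paper needs. Adding a fixed diagonal matrix to $D^2u$ does not force simple eigenvalues off a proper analytic set (the eigenbasis of $D^2u$ varies with $x$), and it leaves $u$ only $C^3\cap W^{4,p}$, so the pointwise second differentiation of the equation---which produces fourth derivatives $u_{abjj}$---is not justified. The paper instead approximates $u$ in $W^{4,p}$ by a polynomial $P$, perturbs $P$ so that $D^2P$ has distinct positive eigenvalues off a real analytic set $V$, and carries the resulting error $f_\ve$ (with $\|f_\ve\|_{L^n}\to0$) through a Harnack inequality for semi-concave functions, not a strong minimum principle. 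A separate semi-concavity lemma then upgrades the integral bound on $Q^{(\ell)}$ to the pointwise bound on $|DQ^{(\ell)}|$ required to run the next step of the induction.
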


We give the proof in Section \ref{sectionpfthm1} below.  The heart of the proof is to establish a differential inequality (see (\ref{differentialinequality}) below) for our expression (\ref{expression}) and then apply a standard Harnack inequality.  Once we have perturbed $u$ so that the eigenvalues of its Hessian are distinct,  this is a straightforward computation (simpler than the analogous calculation in \cite{BG}).  The rest of the proof is concerned with making this formal argument rigorous.

If we replace the condition (\ref{convexcondition}) with a stronger
``strict convexity'' condition, we can prove the following additional
consequence (cf. \cite{BG}). Let us write $n-k$ for the rank of
$D^2u$, which is now constant.
Then there exist $k$ fixed directions $X_1, \ldots, X_{k}$ such
that $(D^2u(x))(X_j)=0$ for all $1 \le j \le k$ and all $x \in \Omega$. We
show this in Section \ref{sectionstrict}.   Examples (including one of
Korevaar-Lewis \cite{KL}) show that a stronger condition than
(\ref{convexcondition}) is indeed necessary for this conclusion. 

We expect that our techniques can also be used to give new proofs of constant rank theorems for 
 parabolic and geometric equations. 
 
\section{Proof of Theorem \ref{theorem1}} \label{sectionpfthm1}
As in \cite{BG}, 
note that the convexity condition (\ref{convexcondition}) can be written as follows:  for every symmetric matrix $(X_{ab}) \in \Sym(n)$, vector $(Z_a) \in \mathbb{R}^n$ and $Y \in \mathbb{R}$, we have
\begin{equation} \label{keyco}
\begin{split}
0 \le {} &  F^{ab,rs} X_{ab} X_{rs} + 2   F^{ar} A^{bs} X_{ab} X_{rs} +  F^{x_a, x_b} Z_a Z_b \\
& {} - 2 F^{ab,u} X_{ab} Y - 2  F^{ab, x_r} X_{ab} Z_r + 2 F^{u,x_a} Y Z_a + F^{u,u} Y^2, 
\end{split}
\end{equation}
where we are evaluating the derivatives $F$ at $(A, p, u, x)$ for a positive definite matrix $A$.  Here, we are using the usual notation for derivatives of $F$ (see \cite{BG}), we write $A^{ij}$ for the  $(i,j)$th entry of $A^{-1}$, and we use the standard convention of  summing repeated indices from $1$ to $n$.

To prove Theorem \ref{theorem1}, it is sufficient to prove the following. Suppose that at $x_0 \in \Omega$, the Hessian $D^2u$ has at least $k$ zero eigenvalues.  Then  there exists $r_0>0$ such that the Hessian $D^2u$ has at least $k$ zero eigenvalues 
 on the ball $B_{r_0}(x_0) \subset \Omega$ of radius $r_0$ centered at $x_0$.

We fix then this point $x_0 \in \Omega$, and write $B=B_{r_0}(x_0)$ for a sufficiently small $r_0>0$ (which we may shrink later) so that $B \subset \Omega$.

As pointed out in \cite{BG}, it follows from our assumptions on $u$ and $F$ and the standard elliptic regularity theory 
that $u$ is in $W^{4,p}(B)$ for all $p$.  We fix, once and for all, $p$ strictly larger than $n$.  Let $\ve>0$.  Since the polynomials are dense in $W^{4,p}(B)$, we can find a polynomial $P$ such that
\begin{equation}
\| P - u \|_{W^{4,p}(B)} \le \ve.
\end{equation}
By the Sobolev embedding theorem, 
\begin{equation} \label{C3alpha}
\| P - u\|_{C^{3, \alpha}(B)} \le C \ve,
\end{equation}
for uniform constants $C>0$ and $\alpha \in (0,1)$.

Now a generic polynomial $P$ will have the property that $D^2P$ has distinct eigenvalues away from a proper real analytic subset.  Note also that by the convexity of $u$, the Hessian $D^2u$ is nonnegative definite.  Hence, by making a small perturbation to $P$, we may assume without loss of generality that the eigenvalues $\Lambda_1 \le \cdots \le \Lambda_n$ of $D^2P$ are \emph{positive and distinct} away from a proper real analytic subset $V \subset B$.  At $x\in B \setminus V$ we have
$$0 < \Lambda_1 < \cdots < \Lambda_n.$$


We consider, for $\ell =1, \ldots, k$, the positive quantity
$$Q^{(\ell)} = \Lambda_{\ell} + 2 \Lambda_{\ell-1} + \cdots + \ell \Lambda_1 = \sum_{j=1}^{\ell} (\ell+1-j) \Lambda_j.$$
We will prove that on $B \setminus V$ (after possibly shrinking the radius of $B$), and for each $\ell =1, \ldots, k$,
\begin{equation} \label{ell}
Q^{(\ell)} + | D Q^{(\ell)}| \le c_{\ve},
\end{equation}
where we write $c_{\ve}$ to mean a constant satisfying $c_{\ve} \rightarrow 0$ as $\ve \rightarrow 0$.  Once (\ref{ell}) holds for $\ell=k$ we are done since then $Q^{(k)} \rightarrow 0$  on $B$ as $\ve \rightarrow 0$, which implies that the first $k$ eigenvalues of $D^2u$ must vanish everywhere on $B$.

We prove (\ref{ell}) by a finite induction.  Assume it holds for $1, 2, \ldots, \ell-1$ (if $\ell=1$, we do not assume anything).  Write $Q=Q^{(\ell)}$.  We first show that $Q$ satisfies the following differential inequality on $B \setminus V$ (on which $Q$ is smooth),
\begin{equation} \label{differentialinequality}
F^{ab}|_P Q_{ab} \le C | DQ| + CQ + f_{\ve},
\end{equation}
where $f_{\ve}$ has the property that $\| f_{\ve} \|_{L^n(B)} \rightarrow 0$ as $\ve \rightarrow 0$, and for a uniform $C$.   In what follows, we will denote by
 $c_{\ve}, f_{\ve}, C$ any quantities with the same properties as described here, where the uniformity will be clear from the context.
 
We compute at a fixed point $x \in B \setminus V$, and we assume that $D^2P$ is diagonal at this point with the eigenvalue $\Lambda_j$ given by $P_{jj}$.  Then the first derivative of $\Lambda_j$ is given at $x$ by
\begin{equation} \label{LFD}
(\Lambda_j)_a = P_{jja}.
\end{equation}
The inductive hypothesis tells us that $|D\Lambda_j| \le c_{\ve}$ for $j=1, \ldots, \ell-1$, and hence at $x$,
\begin{equation} \label{indhyp}
|P_{jji}| \le c_{\ve}, \quad \textrm{for all } j=1, \ldots, \ell-1, \ i=1, \ldots, n.
\end{equation}

We recall (see \cite{Sp} for example) that the second derivative of the eigenvalue $\Lambda_j$ of $D^2P$ at $x$ is given by
$$(\Lambda_j)_{ab} = P_{jjab} + 2 \sum_{m \neq j} \frac{P_{maj} P_{mbj}}{\Lambda_j-\Lambda_m}.$$
Hence
\begin{equation} \label{Qaa0}
\begin{split}
Q_{ab} = {} & \sum_{j=1}^{\ell} (\ell+1-j) P_{jjab} + 2\sum_{j=1}^{\ell} \sum_{m \neq j} (\ell+1-j) \frac{P_{maj}P_{mbj}}{\Lambda_j - \Lambda_m}\\
 = {} & \sum_{j=1}^{\ell} (\ell+1-j) P_{jjab}+  2 \sum_{1 \le j < m \le \ell} (m-j) \frac{P_{maj}P_{mbj}}{\Lambda_j-\Lambda_m} \\
 {} &+ 2\sum_{j=1}^{\ell} \sum_{m>\ell} (\ell+1-j) \frac{P_{maj}P_{mbj}}{\Lambda_j - \Lambda_m},
\end{split}
\end{equation}
where the second line is obtained after cancelling the positive terms with $\Lambda_j-\Lambda_m$ for $j>m$ with the corresponding negative terms. 

We now differentiate the equation (\ref{equation}) twice in the $j$ direction to obtain
\begin{equation} \label{deru}
\begin{split}
0= {} & F^{ab}|_u u_{abjj} + F^{p_a}|_u u_{ajj} + F^u|_u u_{jj} \\
{} & + F^{ab, rs}|_u u_{abj} u_{rsj} + F^{p_a,p_b}|_u u_{aj} u_{bj} + F^{u,u}|_u u_j^2 + F^{x_j, x_j}|_u \\
{} & +
2F^{ab, p_r}|_u u_{abj} u_{rj}+ 2F^{ab, u}|_u u_{abj} u_j + 2F^{ab, x_j}|_u u_{abj} \\
{} & + 2 F^{p_a, u}|_u u_{aj} u_j + 2 F^{p_a, x_j}|_u u_{aj} + 2F^{u, x_j}|_u u_j.
\end{split}
\end{equation}
Replace $u$ by the polynomial $P$, at the expense of an error term $f_{\ve}$, to get
\begin{equation}
\begin{split}
f_{\ve} = {} & F^{ab} P_{abjj} + F^{p_a} P_{ajj} + F^u P_{jj} \\
{} & + F^{ab, rs} P_{abj} P_{rsj} + F^{p_a,p_b} P_{aj} P_{bj} + F^{u,u} P_j^2 + F^{x_j, x_j} \\
{} & +
2F^{ab, p_r} P_{abj} P_{rj}+ 2F^{ab, u} P_{abj} P_j + 2F^{ab, x_j} P_{abj} \\
{} & + 2 F^{p_a, u} P_{aj} P_j + 2 F^{p_a, x_j} P_{aj} + 2F^{u, x_j} P_j,
\end{split}
\end{equation}
where, here and for the rest of  this section, $F^{ab}, F^{p_a}$ etc are all evaluated at $P$, and
$\Vert f_\ve\Vert_{L^n(B)} \to 0$ as $\ve \to 0$.  Combining this with (\ref{Qaa0}), we obtain
\begin{equation} \label{FQ}
\begin{split}
F^{ab}Q_{ab} \le  {} &   
 2 \sum_{1 \le j < m \le \ell} (m-j) F^{ab}
 \frac{P_{maj}P_{mbj}}{\Lambda_j-\Lambda_m}  \\
&+ 2\sum_{j=1}^{\ell} \sum_{m>\ell} (\ell+1-j) F^{ab}\frac{P_{maj}P_{mbj}}{\Lambda_j - \Lambda_m} \\ {} & + C\bigg( \sum_{i=1}^n \sum_{a,b=1}^{\ell} |P_{abi}| \bigg) + CQ+ f_{\ve} + (*),
\end{split}
\end{equation}
where
\[
\begin{split}
(*)={} & - \sum_{j=1}^{\ell} (\ell+1-j) \bigg\{ \sum_{a,b,r,s=\ell +1}^n F^{ab,rs} P_{abj} P_{rsj} + F^{u,u}P_j^2 + F^{x_j,x_j} \\
{} & + 2\sum_{a,b=\ell+1}^n F^{ab,u} P_{abj}P_j + 2 \sum_{a,b=\ell+1}^n F^{ab, x_j} P_{abj} + 2F^{u, x_j} P_j \bigg\}.
\end{split}
\]
Note that we have separated out all terms which involve $P_{abj}$ with
at least two indices between $1$ and $\ell$, as well as all terms
involving the eigenvalues $P_{aa}$ for $a \leq \ell$. 

For each fixed $j$, we now use (\ref{keyco}), with
\begin{equation} \label{choice}
X_{pq} = \left\{ \begin{array}{ll} - P_{pqj} & \text{ if }p,q > \ell \\
0  & \text{ otherwise}, \end{array} \right.
\quad  
Z_i = \left\{ \begin{array}{ll} 1 & \text{ if } i=j \\ 0 & \text{ otherwise,} \end{array} \right.
\quad Y = P_j.
\end{equation}
This implies
\[
\begin{split}
0 \le {} & \sum_{a,b,r,s=\ell+1}^n F^{ab,rs} P_{abj} P_{rsj} + 2 \sum_{a,b,m=\ell+1}^n F^{ab} \frac{P_{maj}P_{mbj}}{\Lambda_m} + F^{x_j,x_j} \\
{} & + 2\sum_{a,b=\ell+1}^n F^{ab,u} P_{abj} P_j + 2 \sum_{a,b=\ell+1}^n F^{ab,x_j} P_{abj} + 2F^{u,x_j} P_j + F^{u,u} P_j^2,
\end{split}
\]
and hence, using that $0 < \Lambda_m-\Lambda_j < \Lambda_m$ whenever
$j < m$,
\begin{equation} \label{num1}
\begin{split}
(*) \le {} & 2 \sum_{j=1}^{\ell} \sum_{a,b,m=\ell+1}^n (\ell+1-j) F^{ab} \frac{P_{maj}P_{mbj}}{\Lambda_m} \\
\le {} & 2\sum_{j=1}^{\ell} \sum_{m > \ell} (\ell+1-j) F^{ab}
\frac{P_{maj}P_{mbj}}{\Lambda_m - \Lambda_j}.
\end{split}
\end{equation}
On the other hand, for a uniform $c > 0$ we have
\[ 2 \sum_{1 \leq j < m \leq \ell} (m-j) F^{ab}
\frac{P_{maj}P_{mbj}}{\Lambda_m - \Lambda_j} \geq cQ^{-1} \sum_{i=1}^n \sum_{1\leq a < b
  \leq \ell} P_{abi}^2, \]
using the ellipticity assumption, and that $Q \geq \Lambda_m -
\Lambda_j$ whenever $ j < m \leq l$. Using this, together with the
inequality
\[ C|P_{abi}| \leq cQ^{-1} P_{abi}^2 + C'Q, \]
it follows that
\begin{equation} \label{num2}
\begin{split} C\sum_{i=1}^n \sum_{a,b=1}^{\ell} |P_{abi}| &\le 
  C\sum_{i=1}^n \sum_{a=1}^{\ell} |P_{aai}| \\
&\quad + 2 \sum_{1 \le j < m \le \ell} (m-j) F^{ab}
\frac{P_{maj}P_{mbj}}{\Lambda_m-\Lambda_j} +C'Q,
\end{split}
\end{equation}
for suitable $C'$. 
Then
combining (\ref{FQ}) with (\ref{num1}), (\ref{num2}), and making use
of the inductive hypothesis (\ref{indhyp}), we obtain 
\[
\begin{split}
F^{ab}Q_{ab} \le {} & C\sum_{i=1}^n \sum_{a=1}^{\ell} |P_{aai}| + CQ + f_{\ve} \\
\le {} & C \sum_{i=1}^n |Q_i| + CQ + f_{\ve}.
\end{split}
\]
Namely, the differential inequality (\ref{differentialinequality}) holds on $B \setminus V$.

We now wish to apply the Harnack inequality to $Q$.  
Note however that $Q$ may not be smooth on $V$.  On the other hand, $Q$ is a semi-concave function on the whole of $B$ (see for example \cite[p.40]{CS}) which means that $Q = U+W$ where $U$ is concave on $B$ and $W \in C^{1,1}(B)$.  We will use  following version of the Harnack inequality (cf. \cite[Lemma 2.1.(III)]{Tr}, where the assumptions are slightly different).

\begin{lemma}  Consider the operator $L$ given by $Lv = a^{ij} D_{ij}v + b^i D_iv  + cv$ with bounded coefficients, with $a^{ij}$ satisfying $\lambda |\xi|^2 \le a^{ij} \xi_i \xi_j \le \Lambda |\xi|^2$ for all $\xi \in \mathbb{R}^n$, with $\lambda, \Lambda>0$.
Let $v$ be a semi-concave nonnegative function on the ball $B\subset\mathbb{R}^n$ which is smooth on $B \setminus V$, where $V$ is a proper real analytic variety.  Suppose that  $Lv \le f$ in $B$ where $f \in L^n(B)$.  Then on the half size ball $B'$,
\begin{equation}
\left( \frac{1}{|B'|} \int_{B'} v^q \right)^{1/q} \le C \left( \inf_{B'} v + \| f\|_{L^n(B)} \right),
\end{equation}
for positive constants $C$ and $q$ depending only on $n$, $\lambda, \Lambda$, bounds for $b^i$ and $c$ and the radius of the ball $B$.
\end{lemma}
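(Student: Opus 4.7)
The plan is to reduce the statement to the classical weak Harnack inequality for $W^{2,n}$ supersolutions (e.g., Gilbarg--Trudinger Theorem 9.22) by first extending the differential inequality $Lv \le f$ from $B \setminus V$ to all of $B$ in a suitable generalized sense. The whole point of assuming $v$ is semi-concave, rather than merely continuous, is that this extension becomes possible across the real analytic singular set $V$.

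First I would unpack the semi-concavity. Writing $v = U + W$ with $U$ concave and $W \in C^{1,1}(B)$, the distributional Hessian $D^2 v$ is a symmetric-matrix-valued Radon measure on $B$. By Alexandrov's theorem, its absolutely continuous part coincides almost everywhere with the pointwise Hessian of $v$, while its singular part equals that of $D^2U$ and is a non-positive matrix-valued measure. Since $(a^{ij})$ is uniformly positive definite, the contraction $a^{ij} D_{ij} v$ is a well-defined Radon measure whose singular part is non-positive. On $B \setminus V$, where $v$ is smooth, this measure coincides with the classical expression $a^{ij} D_{ij} v$, which by hypothesis satisfies $a^{ij} D_{ij} v \le f - b^i D_i v - cv$. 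Because $V$ has Lebesgue measure zero (being a proper real analytic variety), combining these observations gives the distributional inequality $Lv \le f$ on all of $B$.

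Next I would regularize and invoke the standard weak Harnack inequality. A convenient choice is the sup-convolution $v^{\varepsilon}(x) = \sup_y \bigl( v(y) - |x-y|^2/(2\varepsilon) \bigr)$, which preserves the semi-concavity modulus of $v$, lies in $W^{2,\infty}_{\mathrm{loc}}$, converges locally uniformly to $v$, and, by the previous paragraph applied in this regularized setting, satisfies a pointwise inequality $Lv^{\varepsilon} \le f^{\varepsilon}$ almost everywhere with $\|f^{\varepsilon} - f\|_{L^n(B)} \to 0$. The Krylov--Safonov weak Harnack inequality then applies directly to the nonnegative $W^{2,n}$ supersolution $v^{\varepsilon}$ on a slightly shrunken half-ball, and passing $\varepsilon \to 0$ recovers the stated inequality for $v$. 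The main obstacle is the regularization step: one must verify that the sup-convolution (or a mollification variant) genuinely inherits the distributional inequality with an $L^n$-small error, and this is where the uniform ellipticity, the one-sided Hessian bound coming from semi-concavity, and the measure-zero, codimension-one nature of the analytic set $V$ must be combined carefully to control what happens along the singular set.
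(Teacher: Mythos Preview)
Your first paragraph is correct and is a cleaner conceptual explanation than anything the paper writes down: semi-concavity forces the singular part of $a^{ij}D_{ij}v$ to be non-positive, while the absolutely continuous part is controlled on the full-measure set $B\setminus V$ by the hypothesis, so the inequality $Lv\le f$ holds as measures on all of $B$. This already isolates exactly why semi-concavity is the right hypothesis.

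The gap is in your second paragraph. Sup-convolution is the regularization that preserves \emph{subsolution} properties (test functions touching from above), not the supersolution inequality $Lv\le f$. You assert that $v^{\varepsilon}$ satisfies $Lv^{\varepsilon}\le f^{\varepsilon}$ a.e.\ with $\|f^{\varepsilon}-f\|_{L^n}\to 0$, but you give no mechanism, and ``by the previous paragraph applied in this regularized setting'' does not supply one: the distributional inequality you proved is for $v$, not for $v^{\varepsilon}$, and passing it through a sup-convolution is exactly the nontrivial step. (Inf-convolution would preserve the supersolution side but lands you back on a merely semi-concave function, so nothing is gained.) You could instead bypass regularization entirely by invoking a weak Harnack inequality stated directly for $L^n$-viscosity supersolutions, but that is a different (and heavier) citation than the $W^{2,n}$ result you aimed to reduce to.

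The paper avoids both the distributional language and the delicate regularization. It takes the ordinary mollification $v_{\varepsilon}=v\ast\rho_{\varepsilon}$ and splits $B$ into the $2\delta$-neighbourhood $V_{2\delta}$ of $V$ and its complement. Off $V_{2\delta}$ the function $v$ is $C^3$ with bounds depending only on $\delta$, so $Lv_{\varepsilon}\le f+C\varepsilon C_{\delta}$ there by direct comparison. On $V_{2\delta}$ one uses only the uniform upper bound $D^2v_{\varepsilon}\le CI$ coming from semi-concavity (this is the sole use of that hypothesis), so $Lv_{\varepsilon}\le C$; since $|V_{2\delta}|\le C\delta$, this contributes at most $C\delta^{1/n}$ to the $L^n$ norm of the right-hand side. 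One then applies the classical $W^{2,n}$ weak Harnack to the smooth $v_{\varepsilon}$ and lets first $\varepsilon\to 0$, then $\delta\to 0$. Your first paragraph explains \emph{why} nothing bad can happen across $V$; the paper's two-region mollification argument is what actually closes the estimate with only classical tools.
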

\begin{proof}  We give the proof for the reader's convenience.  If $v \in W^{2,n}(B)$, then this result is standard (see for example \cite[Theorem 9.22]{GT}).  We prove the result we need for $v$ semi-concave using a mollification argument.
Let $v_{\ve}$ denote a standard mollification of
$v$, with a mollifier whose support has radius $\ve>0$. Let $\delta
> 0$. Outside the
$\delta$-neighborhood of $V$ we have a bound $\sum_{|\gamma| \le 3} |D^{\gamma}v| \le
C_\delta$. For sufficiently small $\ve$ we will then have 
\[ \sum_{|\gamma| \le 2} |D^{\gamma}(v_\varepsilon - v)| \le \varepsilon C_\delta, \]
away from the $2\delta$-neighborhood of $V$. Applying the differential
inequality $Lv \le f$ outside the $2\delta$-neighborhood of $V$ we will have
\[  Lv_{\ve} = Lv+ L(v_{\ve}-v)  \leq f + C \ve C_\delta  \]
where $C$ depends on the bounds for the coefficients of $L$.

On the other hand, since $v$ is semi-concave we have 
a fixed upper bound for $D^2v_{\ve}$ everywhere.   In particular,  near $V$ we have
$L(v_\ve) \leq C$. Since the $2\delta$-neighborhood of $V$ has
measure at most $C\delta$ for some fixed $C$, applying the standard Harnack
inequality to the smooth function $v_{\ve}$, we obtain
\[ \left( \frac{1}{|B'|} \int_{B'} v_\ve^q\right)^{1/q} \leq C( \inf_{B'} v_\ve + \| f\|_{L^n(B)} + 
\ve C_{\delta} + \delta^{1/n}). \]
 We can then first choose $\delta$ very small,
and then let $\ve \rightarrow 0$, to obtain the required inequality for $v$. 
\end{proof}

Applying this lemma to $Q$, we obtain
$$\left( \frac{1}{|B'|}\int_{B'} Q^q \right)^{1/q}\le C (\inf_{B'} Q + \| f_{\ve}\|_{L^n(B)}) \le c_{\ve},$$
since $\inf_{B'} Q$ tends to zero as $\ve \rightarrow 0$. From (\ref{LFD}) we have 
 a uniform Lipschitz bound on $Q$, and this implies that $|Q| \leq
c_\epsilon$ on $B'$. 

The required bound on $|DQ|$ in
(\ref{ell}) then follows from the next lemma, which uses again the  semi-concavity of $Q$.  Recall from (\ref{C3alpha}) that we have a uniform bound on the $C^{3,\alpha}$ norm of $P$.

\begin{lemma}
  There is a constant $C$ depending on the $C^{3,\alpha}$ norm of $P$
  with the following property.  For $x\in \frac{1}{2}B' \setminus V$ and
  sufficiently small $\varepsilon$,  we have
  \begin{equation} \label{DQbound}
   |DQ(x)|^{1+1/\alpha} \leq C c_\varepsilon, 
   \end{equation}
  where $c_\varepsilon = \sup_{B'} Q$.   Here $\alpha \in (0,1)$ is the constant from (\ref{C3alpha}).
\end{lemma}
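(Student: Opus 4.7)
The plan is to exploit that $Q = Q^{(\ell)}$ is the composition of a concave, globally Lipschitz function $f \colon \Sym(n) \to \mathbb{R}$ with $D^2P$, which combined with the $C^{3,\alpha}$ bound on $P$ yields a one-sided inequality of the form $Q(x+h) \le Q(x) + DQ(x)\cdot h + C|h|^{1+\alpha}$. To set this up, first note that for each $i = 1,\dots,\ell$ the sum of the smallest $i$ eigenvalues of a symmetric matrix $A$ equals the infimum over $i$-dimensional subspaces $W\subset\mathbb{R}^n$ of $\mathrm{tr}(A|_W)$, and hence is concave and globally Lipschitz in $A$. Summing over $i$ and using
\[
Q^{(\ell)} = \sum_{i=1}^{\ell}(\Lambda_1 + \cdots + \Lambda_i)
\]
furnishes the required $f$, which is smooth at any $A$ with distinct eigenvalues.

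Next I would combine concavity of $f$ with the Taylor expansion $D^2P(x+h) = D^2P(x) + D^3P(x)[h] + R(h)$, where $|R(h)| \le C|h|^{1+\alpha}$ by the $C^{3,\alpha}$ bound on $P$. Concavity of $f$ gives $f(A+B) \le f(A) + \partial f(A)\cdot B$, while boundedness of $\partial f$ (from Lipschitzness of $f$) controls the contribution of $R(h)$. Since the chain rule identifies $\partial f(D^2P(x))\cdot D^3P(x)[h]$ with $DQ(x)\cdot h$, we obtain, for $x\in B\setminus V$,
\[
Q(x+h) \le Q(x) + DQ(x)\cdot h + C|h|^{1+\alpha},
\]
with $C$ depending only on $\|P\|_{C^{3,\alpha}}$. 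Applying this with $h = -t\,DQ(x)/|DQ(x)|$ and using $Q \ge 0$ together with $Q(x) \le c_\varepsilon$ yields $t|DQ(x)| \le c_\varepsilon + Ct^{1+\alpha}$ for all admissible $t > 0$. Optimizing over $t$ produces the desired bound $|DQ(x)|^{1+1/\alpha} \le Cc_\varepsilon$.

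The main obstacle is ensuring that the optimizer $t_* = \bigl(|DQ(x)|/(C(1+\alpha))\bigr)^{1/\alpha}$ is admissible, i.e., that $x - t_* DQ(x)/|DQ(x)|$ lies in $B$. Since $Q$ is uniformly Lipschitz, $|DQ(x)|$ and hence $t_*$ are bounded independently of $\varepsilon$; for $x \in \tfrac{1}{2}B'$ there is a fixed $r > 0$ with $B_r(x) \subset B$. If $t_* \le r$ the argument concludes directly, while in the complementary case $|DQ(x)| > C(1+\alpha)r^\alpha$ one substitutes $t = r$ into the inequality to find $c_\varepsilon > c''\, r^{1+\alpha}$ for some fixed $c'' > 0$, contradicting $c_\varepsilon \to 0$ for sufficiently small $\varepsilon$.
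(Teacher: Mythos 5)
Your proposal is correct and follows essentially the same route as the paper: both rest on the concavity and Lipschitz continuity of the weighted smallest-eigenvalue function applied to $D^2P$, combined with the $C^{3,\alpha}$ bound on $P$, to obtain the one-sided estimate $Q(x+h) \le Q(x) + DQ(x)\cdot h + C|h|^{1+\alpha}$, and then optimize the step length $\sim |DQ(x)|^{1/\alpha}$. Your supergradient/chain-rule phrasing and the dichotomy (using $c_\varepsilon \to 0$) for keeping the step inside the ball are only cosmetic variants of the paper's interpolation along a segment with $t \to 0$ and its device of increasing $C_1$ to ensure $B_r(x) \subset B'$.
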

\begin{proof} Let $x \in \frac{1}{2}B' \setminus V$ and let $\xi$ be a unit vector for which $D_{\xi} Q(x)<0$.  We already know that $D_{\xi}Q(x)$ is bounded, since $Q$ is uniformly Lipschitz, and our goal is to obtain the stronger bound (\ref{DQbound}).
For $r>0$ sufficiently small (to be determined later), define $y = x+ r\xi \in B'$ and $$x_t = (1-t)x+ty = x+  rt \xi, \quad \textrm{for } t\in [0,1].$$

Now define the function $h : \mathrm{Sym}(n) \to \mathbf{R}$ by
  \[ h(A) = \ell \lambda_1(A) + \ldots + \lambda_\ell(A), \]
  so that $Q(z) = h(D^2P(z))$ for any $z$. Note that $h$ is a concave Lipschitz
  function. Using the concavity of $h$ we have
  \begin{equation}\label{eq:h}
    \begin{aligned}
      (1-t)Q(x) + tQ(y) &= (1-t) h(D^2P(x)) + t h(D^2P(y)) \\
      &\leq h\left( (1-t)D^2P(x) + tD^2P(y)\right).
    \end{aligned}
  \end{equation}
Next,
  \[ \begin{split}
  \lefteqn{  \left| (1-t)D^2P(x) + tD^2P(y) - D^2P(x_t) \right| } \\
 = {} &  \left| (1-t)\left( D^2P(x) - D^2P(x_t)\right)  + t  \left( D^2P(y) - D^2P(x_t) \right) \right| \\
 \le {} & Ct(1-t)  |y-x|^{1+\alpha},   
    \end{split}
    \]
by applying the Mean Value Theorem to $D^2P(x)-D^2P(x_t)$ and $D^2P(y)-D^2P(x_t)$ and then using the fact that $D^3P$ has bounded
 $\alpha$-H\"older norm.
  Using this in \eqref{eq:h}, and writing $h(D^2P(x_t)) = Q(x_t)$,  we have
  \[ (1-t) Q(x) + t Q(y) \leq Q(x_t) + C t(1-t) |y-x|^{1+\alpha}, \]
  where we also used the Lipschitz property of $h$. 
  This implies
  \begin{equation} \label{eq:DQ} \frac{Q(y) - Q(x)}{|y-x|} \leq \frac{ Q(x_t) - Q(x) }{t |y-x|} +
    C_1|y-x|^\alpha.
  \end{equation}  
  Letting $t \rightarrow 0$ and recalling that $c_{\ve} = \sup_{B'}Q$, we have
  $$- \frac{c_{\ve}}{r} \le D_{\xi}Q(x) + C_1 r^{\alpha},$$
  for a uniform $C_1$.
Choose $r>0$ so that $C_1r^{\alpha} = -D_{\xi}Q(x)/2$ (increasing $C_1$ if necessary to ensure that $B_r(x) \subset B'$).  We obtain after rearranging,
$$|D_{\xi}Q (x)|^{1+1/\alpha}  \le 2c_{\ve}(2C_1)^{1/\alpha},$$
as required.
\end{proof}

This completes the proof of (\ref{ell}) and hence Theorem \ref{theorem1}.

\section{Strict convexity} \label{sectionstrict}

We consider now the case when we replace the condition (\ref{convexcondition}) by a strict convexity type condition.  Recall from Section \ref{sectionpfthm1} that (\ref{convexcondition}) is equivalent to (\ref{keyco}).  We now consider the condition: there exists  a continuous function $\eta>0$ on $\Omega$ such that
 for every symmetric matrix $(X_{ab}) \in \Sym(n)$, vector $(Z_a) \in \mathbb{R}^n$ and $Y \in \mathbb{R}$, we have
 \begin{equation} \label{strict}
\begin{split}
\eta |X|^2 \le {} &  F^{ab,rs} X_{ab} X_{rs} + 2   F^{ar} A^{bs} X_{ab} X_{rs} +  F^{x_a, x_b} Z_a Z_b \\
& {} - 2 F^{ab,u} X_{ab} Y - 2  F^{ab, x_r} X_{ab} Z_r + 2 F^{u,x_a} Y Z_a + F^{u,u} Y^2, 
\end{split}
\end{equation}
where we are evaluating the derivatives $F$ at $(A, p, u, x)$ for a positive definite matrix $A$. Namely we replace the $0$ on the left hand side of (\ref{keyco}) by $\eta |X|^2$.

Then we have:

\pagebreak[3]
\begin{theorem} \label{theorem2}
Let $u$ and $F$ be as in Theorem \ref{theorem1}, except that (\ref{convexcondition}) is replaced by the stronger condition (\ref{strict}).  Let the rank of $D^2u$ be $n-k$.  Then  there exist  $k$ fixed directions $X_1, \ldots, X_{k}$ such that $(D^2u(x))(X_j)=0$ for all $1 \le j \le k$ and all $x \in \Omega$. 
\end{theorem}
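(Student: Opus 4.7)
The plan is to revisit the proof of Theorem \ref{theorem1} and exploit the additional nonnegative term $\eta|X|^2$ that appears when (\ref{strict}) is used in place of (\ref{keyco}). By Theorem \ref{theorem1}, $D^2u$ has constant rank $n-k$, so $E(x):=\ker D^2u(x)$ defines a smooth $k$-dimensional distribution on $\Omega$. I would aim to show $E(x)$ is independent of $x\in\Omega$; any basis of $E$ then serves as the vectors $X_1,\dots,X_k$.

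First, repeating the proof of Section \ref{sectionpfthm1} verbatim but applying (\ref{strict}) at the step where (\ref{keyco}) was used with choice (\ref{choice}), an extra term $\eta|X|^2 = \eta\sum_{p,q>\ell}P_{pqj}^2$ appears. Tracing this through, (\ref{differentialinequality}) is upgraded on $B\setminus V$ to
\[ F^{ab}|_P Q_{ab} + c\eta\sum_{j=1}^{\ell}\sum_{p,q>\ell}P_{pqj}^2 \le C|DQ|+CQ+f_\ve \]
for a uniform $c>0$. Running the Harnack step of Section \ref{sectionpfthm1} with the modified differential inequality (e.g.\ after multiplying by a cutoff and integrating), we obtain, taking $\ell=k$,
\[ \int_{B'}\sum_{j=1}^{k}\sum_{p,q>k}P_{pqj}^2 \longrightarrow 0 \quad\text{as } \ve\to 0.\]
Using $P\to u$ in $C^{3,\alpha}(B)$ and comparing diagonalizing frames of $D^2P$ and $D^2u$ (well-defined off the multi-eigenvalue set), we conclude at every $x\in\Omega$ that $u_{abc}(x)X^aY^bZ^c=0$ whenever $X,Y\in E(x)^\perp$ and $Z\in E(x)$.

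Separately, Theorem \ref{theorem1} gives $u_{ij}V^j=0$ for any smooth section $V$ of $E$. Differentiating this identity and contracting with a section $W$ of $E$ yields $u_{ijk}V^jW^i=0$; combined with the previous vanishing, this forces
\[ u_{abc}(x)\,Z^c=0 \quad\text{for every } Z\in E(x) \text{ and all } a,b.\]
Now for any smooth local section $V$ of $E$ and any direction $x_k$, differentiating $u_{ij}V^j=0$ and using the above yields $u_{ij}\partial_kV^j=0$, so $\partial_kV\in E$. Hence the distribution $E$ is parallel under the flat connection on $\mathbb{R}^n$.

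A standard argument then shows $E$ is locally---and, by connectedness of $\Omega$, globally---constant: given $V_0\in E(x_0)$, put $W(x)=\pi_{E(x)}(V_0)$ and $h(x)=V_0-W(x)\in E(x)^\perp$, with $h(x_0)=0$. Applying parallelism of $E$ to the sections $U\mapsto\pi_{E(x)}(U)$ for arbitrary $U\in\mathbb{R}^n$, one finds that the shape operator $(\partial_k\pi_E)|_E$ vanishes, so $\partial_kh\in E$. Combined with $h\in E^\perp$, this gives $\partial_k|h|^2=2\langle h,\partial_kh\rangle=0$, forcing $h\equiv 0$. Thus $V_0\in E(x)$ for every $x$, and any basis of $E(x_0)$ provides the required directions $X_1,\dots,X_k$. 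The main technical obstacle will be carefully passing to the limit $\ve\to 0$ in the integral bound above to extract the pointwise vanishing of the specified components of $u_{abc}$ in the presence of the degenerate kernel; the semi-concavity/mollification ideas of Section \ref{sectionpfthm1} together with the $C^{3,\alpha}$ convergence $P\to u$ should make this rigorous.
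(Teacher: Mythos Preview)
Your outline is workable, but it takes a much more circuitous route than the paper's. The paper avoids the approximation $P$, the integration, and the limit $\ve\to 0$ entirely, by exploiting the conclusion of Theorem~\ref{theorem1} more directly.

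The key observation you are missing is this: once Theorem~\ref{theorem1} gives $\lambda_1=\cdots=\lambda_k=0<\lambda_{k+1}$ on all of $\Omega$, the spectral gap makes $R=\lambda_1+\cdots+\lambda_k$ a \emph{smooth} function of $x$ (the sum of the bottom $k$ eigenvalues is smooth on the open set of symmetric matrices with a gap between the $k$th and $(k+1)$th eigenvalues), and $R\equiv 0$. Hence $R_{ab}=0$ exactly---no inequality, no perturbation. Computing $R_{ab}$ directly at a point where $D^2u$ is diagonalized, tracing against $F^{ab}|_u$, substituting the twice-differentiated equation (\ref{deru}), and applying (\ref{strict}) with the choice (\ref{choice}) (with $u$ in place of $P$, $\ell=k$, and $A\to D^2u$ through positive matrices) yields immediately
\[
0 \ \ge\ \eta\sum_{j=1}^k\ \sum_{p,q\ge k+1}|u_{pqj}|^2,
\]
so $u_{pqj}=0$ for $j\le k$ and $p,q>k$. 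The complementary vanishing $u_{pqi}=0$ for $p,q\le k$ follows from the elementary remark that the nonnegative function $D^2u(Y,Y)$ attains its minimum $0$ at $x$ for any null vector $Y$. Together these give $D_XD^2u=0$ for every $X\in E(x)$, and the Korevaar--Lewis argument (essentially your final paragraph) finishes.

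Your approach---carrying the extra $\eta|X|^2$ term through the approximation, integrating the differential inequality against a cutoff to extract $\int\sum P_{pqj}^2\to 0$, and then passing to the limit via continuity of the spectral projections across the gap---can indeed be made rigorous along the lines you sketch (the integration by parts works after cutting off near $V$, since $|DQ|\le c_\ve$ controls the boundary-layer contribution). But every one of these steps is unnecessary once you notice that $R$ is smooth and identically zero. The approximation machinery of Section~\ref{sectionpfthm1} was needed there only because the bottom eigenvalues of $D^2u$ might coincide and $Q$ might fail to be smooth; the constant-rank conclusion removes exactly that obstacle here.
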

\begin{proof}  
Write $0 \le \lambda_1 \le \cdots \le \lambda_n$ for the eigenvalues of $D^2u$.  
Our assumption is that $\lambda_1= \cdots= \lambda_k =0$ and $\lambda_{k+1} > 0$ on $\Omega$.  Fix $x_0 \in \Omega$.  It suffices to show that there exists a neighborhood $U$ of $x_0$ and fixed directions 
 $X_1, \ldots, X_k$ such that $(D^2u(x))(X_j)=0$ for $1 \le j \le k$ and all $x \in U$.
Moreover, by an argument of Korevaar-Lewis  \cite[p. 29-30]{KL} it is enough to show that, for $x$ near $x_0$,  we have $D_X D^2u(x) =0$ for all vectors $X$ in the null space of $D^2u(x)$.  
Fix $x$ and choose coordinates such that $D^2u(x)$ is diagonal with $u_{ii}= \lambda_i$.  We wish to show that $u_{pqi}(x)=0$ for $1\le i \le k$ and all $p,q$.

First, let $Y$ be a vector in the null space of $D^2u(x)$ and extend to a constant vector field in a neighborhood of $x$.
Then 
$D^2u(Y,Y)$ is a nonnegative function which vanishes at $x$.  It follows that its first derivative vanishes at $x$, and so $u_{pqi}Y^pY^q=0$ at $x$. Hence 
\begin{equation} \label{upqi}
u_{pqi}(x)=0 \quad \textrm{for } 1 \le p, q\le k \textrm{ and all $i$}.
\end{equation}

We now consider the quantity $R = \lambda_1 + \ldots + \lambda_k$, the
sum of the $k$ smallest eigenvalues.  Since $R=0$ and $\lambda_{k+1}>0$ on $\Omega$, 
it follows that
we can differentiate $R$ (more precisely, the ``sum of first $k$
eigenvalues'' function on the space of symmetric matrices is smooth
in a neighborhood of the set of values of $D^2u$), and we obtain
\[ 0 = R_{ab} = \sum_{j=1}^k u_{jjab} + 2\sum_{j=1}^k \sum_{m \ge k+1}
\frac{u_{maj}u_{mbj}}{ - \lambda_m}. \]
Taking the trace of this equation with respect to $F^{ab}$ (evaluating at $u$) and using (\ref{deru}) and (\ref{upqi}) gives
\[ 
\begin{split}
0 = {} &  \sum_{j=1}^k \bigg\{ \sum_{a,b,r,s \ge k+1} F^{ab,rs} u_{abj} u_{rsj} + F^{u,u} u_j^2+ F^{x_j,x_j} 
+ 2\sum_{a,b \ge k+1} F^{ab,u} u_{abj} u_j \\
{} & + 2\sum_{a,b \ge k+1}F^{ab,x_j}u_{abj} +2F^{u,x_j} u_j  + 2\sum_{a,b,m \ge k+1}F^{ab} \frac{u_{maj} u_{mbj}}{\lambda_m} \bigg\}.
\end{split}
\]
We now apply the strict convexity assumption at $u$ using the choices (\ref{choice}) with $P$ replaced by $u$ and $\ell$ replaced by $k$.  Note that since $D^2u$ is not strictly positive definite, in (\ref{strict}) we take positive definite matrices $A$ with  $A \rightarrow D^2u$.  This implies
\[ 0 \ge \eta \sum_{j=1}^k \sum_{p, q \ge k+1}
|u_{pqj}|^2, \]
and hence $u_{pqi}=0$ for $i \leq k$ and $p,q > k$. Together with (\ref{upqi}), this completes the proof of the theorem.
\end{proof}

To see that the assumptions of Theorem \ref{theorem1} are not sufficient for the stronger conclusion of Theorem \ref{theorem2}, one can consider the example of Korevaar-Lewis \cite[p. 31]{KL} which corresponds to $F(A,u) = u-1/\textrm{tr}(A)$.  In our setting, a similar but slightly simpler example is given by
\begin{equation} \label{example}
F(A,x) = r-\frac{n-1}{\textrm{tr}(A)},
\end{equation}
where $r = \sqrt{x_1^2 + \cdots +x_n^2}$, with $\Omega$ a small ball which does not intersect the origin in $\mathbb{R}^n$.   
A solution of the equation $F(D^2u,x)=0$ is given by $u= r$, which is linear along different lines at different points.  One can check that $F$ given by (\ref{example}) satisfies (\ref{convexcondition}), but not the stronger condition (\ref{strict}).

\end{document}